\numberwithin{equation}{section}
\newcommand{\Vast}{\bBigg@{4}}
\newcommand{\Vvast}{\bBigg@{5}}
\newcommand{\vvast}{\bBigg@{3.5}}
\def\Ddots{\mathinner{\mkern1mu\raise\p@
\vbox{\kern7\p@\hbox{.}}\mkern2mu
\raise4\p@\hbox{.}\mkern2mu\raise7\p@\hbox{.}\mkern1mu}}
\DeclareMathOperator{\dm}{dim}
\DeclareMathOperator{\rk}{rk}
\DeclareMathOperator{\SSYT}{SSYT}
\newtheorem{thm}{Theorem}[section]
\newtheorem{cor}[thm]{Corollary}
\newtheorem{lem}[thm]{Lemma}
\newtheorem{defn}[thm]{Definition}
\newtheorem{rem}[thm]{Remark}
\newtheorem{eg}[thm]{Example}
\tikzstyle{bhor}=[rectangle, draw, thick, minimum width=0.25cm, minimum height=1cm]
\tikzstyle{br}=[rectangle, draw, thick, minimum width=3em, minimum height=3em]
\tikzstyle{ar}=[rectangle, draw, thick, minimum width=3.5em, minimum height=3.5em]
\title{A short bijective proof of dimension identities of Erickson and Hunziker}
\author{Nishu Kumari}
\address{Nishu Kumari, Department of Mathematics, 
Indian Institute of Science, Bangalore  560012, India.}
\email{nishukumari@iisc.ac.in}
\date{\today}
\begin{document}
\begin{abstract}
In a recent paper (arXiv:2301.09744), Erickson and Hunziker consider partitions in which the arm–leg difference is an arbitrary constant $m$. In previous works, these partitions are called $(-m)$-asymmetric partitions. Regarding
these partitions and their conjugates as highest weights, they prove an identity yielding an infinite family of dimension equalities between $\mathfrak{gl}_n$ and $\mathfrak{gl}_{n+m}$ modules. Their proof proceeds by the manipulations of the hook content formula. 
We give a simple bijective proof of their result.

\end{abstract}

\subjclass[2020]{05E10, 05E05}
\keywords{$z$-asymmetric partitions, Littlewood identities, dimension identities}

\maketitle

\section{Introduction}
Schur polynomials, denoted $s_{\lambda}(x_1,\dots,x_n)$ are  symmetric polynomials that play a central role in the representation theory of the symmetric groups, general linear groups, and unitary groups,
exhibiting a wide range of elegant properties and finding numerous applications. In the literature, numerous identities can be found that contain Schur polynomials~\cite{sagan,macdonald-2015,littlewood-1950,James-Kerber-2009}.
We recall the following classical identities due to Littlewood~\cite{littlewood-1950} that involve the Schur polynomials:
\begin{equation}
\label{litt-1}
\prod_{1 \leq i \leq j \leq n} (1-x_ix_j)= \sum_{\lambda} (-1)^{|\lambda|/2} s_{\lambda}(x_1,\dots,x_n),
\end{equation}
\begin{equation}
\label{litt-2}
 \prod_{1 \leq i < j \leq n+1} (1-x_ix_j)= \sum_{\lambda} (-1)^{|\lambda|/2} s_{\lambda'}(x_1,\dots,x_n,x_{n+1}),   
\end{equation}
where each sum is over partitions of length at most $n$ and of the form $(\alpha+1|\alpha)$, in Frobenius coordinates (see \cref{sec:pre} for precise definition).
Recently Erickson and Hunziker~\cite{erick} have considered partitions 
of the form $(\alpha+m|\alpha)$, in Frobenius coordinates, known as $(-m)$-asymmetric partitions. 
By considering these partitions and their conjugates as the highest weights, they prove 
dimension equalities
between $\mathfrak{gl}_n$ and $\mathfrak{gl}_{n+m}$ modules. Their proof proceeds by the manipulations of the hook content formula. 
As a consequence, they derive six
new families of identities (see~\cite[Table 4]{erick}) generalizing classical identities (\eqref{litt-1},\eqref{litt-2}).
In this article, we give a simple bijective proof of their dimension equalities. 
We give all the definitions and preliminary results in \cref{sec:pre}. Next, we prove the dimension identities bijectively in \cref{sec:bij}.
\section{Preliminaries}
\label{sec:pre}
Recall that a {\em partition} $\lambda= (\lambda_1,\dots,\lambda_m)$ is a weakly decreasing sequence of nonnegative integers. The {\em weight} of a partition $\lambda$, denoted $|\lambda|$ is the sum of all of its parts and the {\em length} of a partition $\lambda$, denoted $\ell(\lambda)$ is the number of nonzero parts of $\lambda$. 
A partition $\lambda$ can be represented pictorially as a \emph{Young diagram}, whose $i^{\text{th}}$ row contains $\lambda_i$ left-justified boxes. 
For example, the Young diagram of $(4,2,2,1)$ is
\begin{equation}
\label{eg-ydiag}
\ydiagram{4,2,2,1}
\raisebox{-2em}{.}
\end{equation}

For a partition, $\lambda$, the {\em conjugate partition}, denoted $\lambda'$, is the partition whose Young diagram is obtained by transposing the Young diagram of $\lambda$.
The (Frobenius) {\em rank} of a partition $\lambda$, denoted $\rk(\lambda)$, is the largest integer $k$ such
that $\lambda_k \geq k$. The {\em Frobenius coordinates} of $\lambda$ are a pair of distinct partitions, denoted $(\alpha|\beta)$, of length $\rk(\lambda)$ given by $\alpha_i=\lambda_i-i$ and $\beta_j=\lambda'_j-j$. For example, the Frobenius
coordinates of the running example $(4,2,2,1)$ are $(3, 0|3, 1)$. 
For $a \in \mathbb{N}$ and a partition $\lambda=(\lambda_1,\dots,\lambda_m)$, by
$a+\lambda$ we will mean the partition $(a+\lambda_1,\dots,a+\lambda_m)$. 
\begin{defn}
\label{def:z-asym}
Let $z$ be an integer. We say that a partition $\lambda$ is \emph{$z$-asymmetric} if $\lambda=(\alpha|\, \alpha+z)$, in Frobenius coordinates for some strict partition $\alpha$. More precisely, 
$\lambda = (\alpha|\beta)$ where $\beta_i = \alpha_i + z$ for $1 \leq i \leq \rk(\lambda)$.
\end{defn}

The {\em $z$-asymmetric} terminology, as defined in~\cite{AYYER2022437}, is adopted in this work. 
Specifically, when $z=-1$, partitions that are $(-1)$-asymmetric are variously referred to as {\em almost self-conjugate}, 
{\em orthogonal}, or 
{\em shift-symmetric} in~\cite{erick}, \cite{AYYER2022437} and ~\cite{ALCO}, respectively.
Furthermore, a $(-1)$-asymmetric partition is the same as Macdonald’s double of a strict partition~\cite[p.14]{macdonald-2015} and Garvan–Kim–Stanton’s doubled partition of a strict partition $\alpha$, denoted $\alpha\alpha$~\cite[Sec. 8]{garvan-kim-stanton-1990}. 
In addition, the $1$-asymmetric partition is the same as the {\em threshold} partition in~\cite{Digjoy} and the {\em symplectic} partition in~\cite{AYYER2022437}. Note that the empty partition is $z$-asymmetric for all $z \in \mathbb{Z}$ vacuously. For example, the only $1$-asymmetric partitions of $6$ are $(3,1,1,1)$ and $(2,2,2)$.

For a cell $b = (i,j)$ in (the Young diagram of) $\lambda$, the \emph{hook length} is given by $h(b) = \lambda_i - i + \lambda'_j - j + 1$, which is the number of cells in its row to its right plus those in its column below it. 
The \emph{content} of $b$, denoted $c(b)$, is $j-i$. The \emph{arm} (resp. \emph{leg}) of $b$, denoted $a(b)$ (resp. $\ell(b)$), is the rightmost (resp. bottommost) cell in its row (resp. column). For example, the hook lengths and contents of the running example are\\
\[
\begin{ytableau}
7 & 5 & 2 & 1 \\
4 & 2 \\
3 & 1 \\
1
\end{ytableau}
\qquad 
\raisebox{-3.5ex}{and}
\qquad
\begin{ytableau}
0 & 1 & 2 & 3 \\
-1 & 0 \\
-2 & -1 \\
-3
\end{ytableau}
\qquad
\raisebox{-3.5ex}{respectively.}
\]


Recall that a {\em semistandard tableau or tableau} of shape $\lambda$ with entries in $\{1,2,\dots,n\}$  is a filling of $\lambda$ such that the entries increase weekly along rows and strictly along columns. For a partition $\lambda$ of length at most $n$, we denote the set of semistandard tableaux of shape $\lambda$ filled with entries bounded by $n$ by $\SSYT_{\lambda}$.
Suppose $F^{(n)}_{\pi}$ is a finite-dimensional simple $\mathfrak{gl}_n$ module with highest weight $\pi$, where $\pi$ is a partition of length at most $n$. Then the dimension of $ F^{(n)}_{\pi}$, denoted $\dm F^{(n)}_{\pi}$ is equal to the 
cardinality of $\SSYT_{\pi}$,
and is given by the following hook content formula~\cite{stanley1971theory}.
\begin{equation}
\label{dim}
 \dm F^{(n)}_{\pi}  = \prod_{u \in \pi} \frac{n+c(u)}{h(u)}.   
\end{equation}


For a partition $\lambda=(\lambda_1,\ldots,\lambda_n)$, the \emph{Schur polynomial} $s_{\lambda}(x_1,x_2,\dots,x_n)$ 
is given by
\begin{equation}
 \label{gldef}
s_\lambda(X)=\frac{\displaystyle\det_{1\le i,j\le n}\Bigl(x_i^{\lambda_j+n-j}\Bigr)}
{\displaystyle\det_{1\le i,j\le n}\Bigl(x_i^{n - j}\Bigr)},
\end{equation}
where the denominator is the standard Vandermonde determinant,
\begin{equation}
\label{gldenom}
\det_{1\le i,j\le n}\Bigl(x_i^{n - j}\Bigr)
= \prod_{1\le i<j\le n}(x_i-x_j).
\end{equation}

Stanley~\cite{stanley} specialized the hook content formula given in \eqref{dim} and showed that the principle specialisation of the Schur polynomial $s_{\lambda}(1,q,\dots,q^{n-1})$ can be expressed as a
product involving the hook lengths and contents of the boxes in the diagram for $\lambda$. 
Given a partition $\lambda$, define 
\begin{equation}
\label{k}
k(\lambda)=\sum_{i} (i-1) \lambda_i = \sum_{i} \binom{\lambda'_i}{2}.    
\end{equation}
By \cite[Theorem 7.21.2]{stanley}, we have
\begin{equation}
\label{spe-schur}
s_{\lambda}(1,q,\dots,q^{n-1}) = q^{k(\lambda)} \prod_{u \in \lambda} \frac{[n+c(u)]}{[h(u)]}, 
\end{equation}
where $[a]=1+q+\dots+q^{a-1}$, $a \in \mathbb{N}$.
 
\begin{lem}
\label{lem:k}
Let $\lambda$ be an $m$-asymmetric partition.
Then 
\[
    2(k(\lambda)-k(\lambda'))= m|\lambda|.
    \]
\end{lem}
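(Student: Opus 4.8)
The plan is to reduce both sides to a statement about the sum of contents of $\lambda$. First I would establish the shape-independent identity
\[
k(\lambda) - k(\lambda') = -\sum_{u \in \lambda} c(u),
\]
valid for \emph{every} partition $\lambda$. To see this, use the first expression in \eqref{k}: writing $k(\lambda) = \sum_i (i-1)\lambda_i$ amounts to summing the constant $(i-1)$ over the $\lambda_i$ cells of row $i$, so $k(\lambda) = \sum_{(i,j) \in \lambda}(i-1)$. Transposing the Young diagram interchanges rows and columns, which gives $k(\lambda') = \sum_{(i,j)\in\lambda}(j-1)$. Subtracting, the two occurrences of $-1$ cancel and I obtain $k(\lambda)-k(\lambda') = \sum_{(i,j)\in\lambda}(i-j) = -\sum_{u \in \lambda}c(u)$, since $c(i,j)=j-i$. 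It therefore suffices to prove that $\sum_{u\in\lambda}c(u) = -\tfrac{m}{2}\,|\lambda|$.

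Next I would evaluate the content sum using the Frobenius coordinates $(\alpha\,|\,\beta)$ of $\lambda$. Grouping the cells according to their position relative to the main diagonal, each diagonal cell $(i,i)$ has content $0$; the cells to its right in row $i$ have contents $1,2,\dots,\alpha_i$; and the cells below it in column $i$ have contents $-1,-2,\dots,-\beta_i$. Summing over $1 \le i \le \rk(\lambda)$ yields
\[
\sum_{u\in\lambda}c(u) \;=\; \sum_{i=1}^{\rk(\lambda)}\left(\binom{\alpha_i+1}{2} - \binom{\beta_i+1}{2}\right).
\]

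Finally I would impose the $m$-asymmetry hypothesis $\beta_i = \alpha_i + m$ from \cref{def:z-asym}. A direct expansion of the binomials gives $\binom{\alpha_i+1}{2} - \binom{\alpha_i+m+1}{2} = -\tfrac{m}{2}(2\alpha_i + m + 1)$, so the content sum collapses to $-\tfrac{m}{2}\sum_i (2\alpha_i + m + 1)$. Since the Frobenius description of the weight reads $|\lambda| = \sum_i(\alpha_i + \beta_i + 1) = \sum_i(2\alpha_i + m + 1)$ (one diagonal cell plus an arm of length $\alpha_i$ and a leg of length $\beta_i$ for each $i$), this is precisely $-\tfrac{m}{2}\,|\lambda|$, and combining with the first step gives $2(k(\lambda)-k(\lambda')) = m|\lambda|$. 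I expect no genuine obstacle here: the only point requiring care is the bookkeeping in the content sum, namely counting each diagonal cell once and attaching the correct signs to the triangular-number contributions of arms and legs. The identity $k(\lambda)-k(\lambda') = -\sum_u c(u)$ carries all of the conceptual weight, after which everything is routine arithmetic.
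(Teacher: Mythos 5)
Your proof is correct, but it takes a genuinely different route from the paper. The paper proves \cref{lem:k} by brute force: it expands the parts of $\lambda=(\alpha|\alpha+m)$ and of $\lambda'$ explicitly as staircase sequences with multiplicities, then manipulates the resulting sums for $k(\lambda)-k(\lambda')$ and $|\lambda|$ term by term. You instead reduce everything to the content sum: you first give a self-contained proof of the general identity $k(\lambda)-k(\lambda')=-\sum_{u\in\lambda}c(u)$ (which the paper only quotes from Macdonald \emph{after} the lemma, in order to deduce \cref{cor:content1}), and then evaluate $\sum_{u\in\lambda}c(u)$ via the principal-hook decomposition in Frobenius coordinates, where the hypothesis $\beta_i=\alpha_i+m$ enters through the single computation $\binom{\alpha_i+1}{2}-\binom{\alpha_i+m+1}{2}=-\tfrac{m}{2}(2\alpha_i+m+1)$ together with $|\lambda|=\sum_i(\alpha_i+\beta_i+1)$. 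In effect you prove \cref{cor:content1} directly (since $\sum_{u\in\lambda'}c(u)=-\sum_{u\in\lambda}c(u)$, your statement $\sum_{u\in\lambda}c(u)=-\tfrac{m}{2}|\lambda|$ is equivalent to it) and then run Macdonald's identity backwards to obtain the lemma, reversing the paper's logical order. What your approach buys is brevity and transparency: it avoids the explicit staircase expansion and the multiplicity bookkeeping of the paper's equations \eqref{12}--\eqref{14}, and it makes clear that the whole lemma is a statement about contents along diagonal hooks. What the paper's approach buys is self-containedness at the level of the definition \eqref{k} alone, with no intermediate identity needed; it also produces the explicit part-expansions of $\lambda$ and $\lambda'$, which are reused conceptually elsewhere (e.g.\ in \cref{cotent}). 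All your individual steps check out: the cell-wise rewriting $k(\lambda)=\sum_{(i,j)\in\lambda}(i-1)$, the transposition argument, the arm/leg content inventory, and the final arithmetic are all correct.
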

\begin{proof} First, note that $\lambda=(\alpha|\alpha+m)$, for a strict partition $\alpha$ if and only if $\lambda$ is of the form
\begin{equation*}
    \lambda  
    =(\alpha_1+1,\dots,\alpha_r+r,\underbrace{r,\dots,r}_{\alpha_r+m },
 \underbrace{r-1,\dots,r-1}_{\alpha_{r-1}-\alpha_r-1 },\dots,
 \underbrace{1,\dots,1}_{\alpha_1-\alpha_2-1}).
\end{equation*}

In that case, its conjugate is
\begin{equation*}
    \lambda'
    = (\alpha_1+m+1,\dots,\alpha_r+m+r,\underbrace{r,\dots,r}_{\alpha_r },
 \underbrace{r-1,\dots,r-1}_{\alpha_{r-1}-\alpha_r-1 },\dots,
 \underbrace{1,\dots,1}_{\alpha_1-\alpha_2-1}).
\end{equation*}


By \eqref{k}, we have 
\begin{equation}
    \label{12}
    \begin{split}
    k(\lambda)-k(\lambda') 
    = \sum_{i=1}^{m+n} (i-1) (\lambda_i-\lambda_i') & \\
 = - \sum_{i=1}^{r} m(i-1) + & \sum_{i=r+\alpha_r+1}^{r+\alpha_r+m} (i-1) r  
+  \sum_{i=1}^{r-1} im(\alpha_{i}-\alpha_{i+1}-1).
\end{split}
\end{equation}

We note that 
\begin{equation}
\label{13}
-\sum_{i=1}^{r} m(i-1)  + \sum_{i=r+\alpha_r+1}^{r+\alpha_r+m} (i-1) r = \frac{mr(r+m)}{2} + mr \alpha_r.     
\end{equation}
Using \eqref{13} in \eqref{12}, we have
\begin{equation}
\label{15}
k(\lambda)-k(\lambda')  =   \frac{mr(r+m)}{2} + mr \alpha_r  + \sum_{i=1}^{r-1} im(\alpha_{i}-\alpha_{i+1}-1).
\end{equation}
Now observe that 
\begin{equation}
\label{14}
|\lambda|= r^2+mr+2r\alpha_r+ \sum_{i=1}^{r-1} 2i(\alpha_{i}-\alpha_{i+1}-1).    
\end{equation}
So, simplifying \eqref{15} using \eqref{14}, we get
\begin{equation*}
     k(\lambda)-k(\lambda')  = \frac{mr(r+m)}{2} + mr \alpha_r  + \frac{m}{2} \left(|\lambda| - r^2 - mr - 2r\alpha_r \right) 
 = \frac{m}{2} |\lambda|,
\end{equation*}
completing the proof.
\end{proof}

For a partition $\lambda$, by~\cite[Chapter I.1, Example 3]{macdonald-2015}, we have 
\[
\sum_{u \in \lambda} c(u)=k(\lambda')-k(\lambda). 
\]
So, the following corollary holds.
\begin{cor}
\label{cor:content1}
Let $\lambda$ be an $m$-asymmetric partition.
Then 
    \[\displaystyle \sum_{u \in \lambda} c(u) - \displaystyle \sum_{u \in \lambda'} c(u) = -m|\lambda|\]
\end{cor}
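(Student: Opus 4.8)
The plan is to apply the content identity $\sum_{u \in \mu} c(u) = k(\mu') - k(\mu)$ (quoted from Macdonald immediately before the corollary) to the two partitions $\lambda$ and $\lambda'$ separately, and then subtract the resulting expressions. This reduces the claim entirely to \cref{lem:k}, so there is no substantial obstacle beyond careful bookkeeping.

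First I would write, directly from the cited identity,
\[
\sum_{u \in \lambda} c(u) = k(\lambda') - k(\lambda).
\]
Next I would apply the same identity with $\mu = \lambda'$. Since conjugation is an involution, $(\lambda')' = \lambda$, and this yields
\[
\sum_{u \in \lambda'} c(u) = k(\lambda) - k(\lambda').
\]

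Subtracting the second display from the first gives
\[
\sum_{u \in \lambda} c(u) - \sum_{u \in \lambda'} c(u) = \bigl(k(\lambda') - k(\lambda)\bigr) - \bigl(k(\lambda) - k(\lambda')\bigr) = -2\bigl(k(\lambda) - k(\lambda')\bigr).
\]
Finally, because $\lambda$ is $m$-asymmetric, \cref{lem:k} gives $2\bigl(k(\lambda) - k(\lambda')\bigr) = m|\lambda|$, and substituting this in produces $-m|\lambda|$, as desired.

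The one place requiring attention is the second application of the content identity: one must remember that $k$ is evaluated at the \emph{conjugate} of the input, so that feeding in $\lambda'$ (whose conjugate is $\lambda$) produces $k(\lambda) - k(\lambda')$ rather than $k(\lambda') - k(\lambda)$. Getting this involution and the accompanying sign right is the only subtlety; once it is handled, the conclusion is immediate from \cref{lem:k}.
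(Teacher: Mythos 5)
Your proposal is correct and follows exactly the route the paper intends: the paper states Macdonald's identity $\sum_{u \in \lambda} c(u) = k(\lambda') - k(\lambda)$ immediately before the corollary and declares the result to follow, the implicit argument being precisely your application of that identity to both $\lambda$ and $\lambda'$ (using that conjugation is an involution) combined with \cref{lem:k}. Your write-up simply makes explicit the bookkeeping that the paper leaves to the reader.
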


To give a bijective proof of Stanley's hook content formula, Krattenthaler~\cite{KRATTENTHALER199966} introduced the notion of a content tabloid and a hook tabloid of a fixed shape. We note down the definitions below. 

\begin{defn}
Let $\lambda$ be a partition of length at most $n$. A {\em content tabloid} of shape $\lambda$ is a filling of the cells of $\lambda$ with
integers such that the entry in each cell $b$ of $\lambda$ is at least $1-c(b)$ and at most $n$. We denote the set of content tabloids of shape $\lambda$ by $C_{\lambda}$. Also, the sum of the entries of some content tabloid $T$ is called the norm of $T$ and denoted by $n(T)$.
\end{defn}

The number of content tabloids of shape $\lambda$ is: 
\begin{equation}
\label{count:content}
   \prod_{u \in \lambda} (n+c(u)). 
\end{equation}

\begin{defn}
Let $\lambda$ be a partition. A {\em hook tabloid} of shape $\lambda$ is a filling of the cells of $\lambda$ with
integers such that the entry in each cell $b$ of $\lambda$ is at least $-a(b)$ and at most $\ell(b)$.
We denote the set of hook tabloids of shape $\lambda$ by $H_{\lambda}$.
\end{defn}

The number of hook tabloids of shape $\lambda$ is: 
\begin{equation}
\label{count:hook}
  \prod_{u \in \lambda} (a(u)+\ell(u)+1)=
\prod_{u \in \lambda} h(u).  
\end{equation}

\begin{eg} Suppose $\lambda=(4,2,2,1)$ is a partition of length at most $4$. Then the number of content tabloids and hook tabloids of the shape $\lambda$ are 60480 and 1680 respectively. The following figure illustrates a content tabloid and hook tabloid of shape $\lambda$:
\[
\begin{ytableau}
1 & 2 & 4 & -2\\
3 & 2 \\
3 & 2\\
4 
\end{ytableau} 
\qquad 
\raisebox{-3.5ex}{and}
\qquad
\begin{ytableau}
-3 & 1 & 0 & 0\\
1 & -1 \\
0 & 0\\
0 
\end{ytableau}
\raisebox{-3.5ex}{.}
\]
\end{eg}
\begin{defn}
    We say that a bi-infinite sequence $X=(\dots,x_{-2},x_{-1},x_0,x_1,x_2,\dots)$ is a {\em content sequence} if it satisfies the following conditions:
    \begin{itemize}
    \item the sequence is a unimodal sequence with the maximum at the origin, i.e. $\dots \leq x_{-2} \leq x_{-1} \leq x_0 \geq x_1 \geq x_2 \geq \dots$, 
    \item the sequence contains each number from the set $\{1,\dots,x_0-1\}$ at least once on both sides of $x_0$. 
\end{itemize}
\end{defn}
We call $x_0$ the peak of the content sequence $(\dots,x_{-2},x_{-1},x_0,x_1,x_2,\dots)$.
We underline the origin to avoid any ambiguity. For example $(\dots,0,1,2,2,2,\underline{3},3,2,1,0,\dots)$ is a content sequence with the peak $3$. The set of all content sequences is denoted as $\mathcal{C}$.
\begin{lem}
  There exists a one-to-one correspondence between the set $\mathcal{P}$ of all partitions and the set $\mathcal{C}$ of all content sequences.
\end{lem}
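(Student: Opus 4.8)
The plan is to construct an explicit bijection between an arbitrary partition $\mu$ and a content sequence, and then verify that this map is reversible. The key observation is that a content sequence encodes exactly the same data as the multiset of contents of the cells of a Young diagram, organized by the unimodal structure that a genuine Young diagram forces. Recall that the content of a cell $(i,j)$ is $j-i$, which is constant along each diagonal of the diagram. So first I would associate to a partition $\mu$ the sequence $X = (\dots, x_{-1}, \underline{x_0}, x_1, \dots)$ where, for each integer $d$, the value $x_{-d}$ (reading leftward) records the number of cells on the diagonal of content $d$ lying weakly below the main diagonal, and $x_d$ (reading rightward) records the number of cells on the diagonal of content $d$ lying weakly above it; the peak $x_0$ is the number of cells on the main diagonal, i.e.\ the Frobenius rank $\rk(\mu)$.

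The main steps are then as follows. First I would make the encoding precise so that $x_0 = \rk(\mu) = k$, the number of diagonals of the diagram that meet the main diagonal, and check that the two defining conditions of a content sequence hold. Unimodality $\dots \le x_{-1} \le x_0 \ge x_1 \ge \dots$ follows because as one moves away from the main diagonal in either direction, the diagonals of a Young diagram can only get shorter (the number of cells on successive diagonals is weakly decreasing once we leave the main diagonal, and weakly increasing as we approach it from either side). The second condition---that every value in $\{1, \dots, x_0 - 1\}$ appears at least once on each side of $x_0$---I would derive from the fact that the diagonal lengths near the main diagonal, in a diagram of rank $k$, must pass through all intermediate lengths, since a diagram containing $k$ cells on its main diagonal has, on each side, diagonals whose lengths run from $k$ down through every smaller positive value before reaching the boundary. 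These are precisely the structural constraints imposed by the Frobenius coordinates $(\alpha \mid \beta)$ of $\mu$.

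For the inverse map, given a content sequence $X$ with peak $x_0 = k$, I would reconstruct the partition by reading off the $\alpha_i$ and $\beta_j$. Concretely, the positions and lengths of the diagonals determine the arm and leg lengths of the $k$ diagonal cells, hence the Frobenius coordinates, hence $\mu$ uniquely; the strictness of $\alpha$ and $\beta$ corresponds exactly to the strict weak-inequalities implicit in unimodality together with the ``all intermediate values appear'' condition. I would then verify that these two constructions are mutually inverse, which amounts to checking that passing from diagram to sequence and back returns the same diagonal data.

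The hard part will be setting up the bookkeeping cleanly enough that both defining properties of a content sequence, and especially the ``each value in $\{1,\dots,x_0-1\}$ occurs on both sides'' condition, fall out transparently from the Young diagram geometry rather than from a case analysis; the subtlety is matching the strictness of the Frobenius partitions $\alpha, \beta$ with the precise shape of the sequence near its peak. Once the correspondence is stated in terms of diagonal lengths, reversibility is essentially immediate, so I expect the genuine content of the proof to lie in the forward construction and the careful justification of the second content-sequence axiom. A small illustrative example, tracking the running partition $(4,2,2,1)$ through the map to exhibit its content sequence, would make the bijection concrete and confirm the bookkeeping.
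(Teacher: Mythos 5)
Your proposal is correct and takes essentially the same approach as the paper: both encode a partition by the lengths of its diagonals (the number of cells of each content, with the peak at the main diagonal equal to the Frobenius rank), verify unimodality and the ``all intermediate values appear on both sides'' axiom from the structure of the Frobenius coordinates $(\alpha|\beta)$, and invert by reading the strict partitions $\alpha$ and $\beta$ back off the sequence. The paper's proof is simply a terser version of this, writing the sequence explicitly in terms of $(\alpha|\beta)$ and appealing to the strictness of $\alpha$ and $\beta$.
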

\begin{proof}
Define a map $x:\mathcal{P} \rightarrow \mathcal{C}$ by 
\[
(x(\lambda))_i \coloneqq \text{number of boxes in $\lambda$ with content $i$, $i \in \mathbb{Z}$.}
\]
Then $x(\lambda)$ will be of the form    
\begin{equation}
\label{x-1}
\begin{split}
x(\lambda) 
= (\dots,
\underbrace{1,\dots,1}_{\beta_1-\beta_2}, \dots, 
\underbrace{r-1,\dots,r-1}_{\beta_{r-1}-\beta_r},
&\underbrace{r,\dots,r}_{\beta_r}, 
\underline{r}, \\
&\underbrace{r,\dots,r}_{\alpha_r},
 \underbrace{r-1,\dots,r-1}_{\alpha_{r-1}-\alpha_r},
 \dots,
 \underbrace{1,\dots,1}_{\alpha_1-\alpha_2},
 \dots),  
 \end{split}
\end{equation}
where $\lambda=(\alpha|\beta)$ be a partition with rank $r$ (see \cref{eg-content}).  
Since both $\alpha$ and $\beta$ are strict partitions, $x(\lambda)$ is a content sequence.
So, $x$ is well defined. Also, it is easy to see that for a content sequence $c \in \mathcal{C}$, there exists a unique partition $\lambda$ such that $x(\lambda)=c$.
Hence, $x$ is invertible.
\end{proof}
\begin{rem}
\label{cotent} 
We call $x(\lambda)$ the {\em content sequence of $\lambda$}.
 A partition $\lambda$ is of the form $(\alpha+m|\alpha)$ for a strict partition $\alpha$ and an integer $m \geq 0$ $($i.e $\lambda'$ is $m-$asymmetric$)$ if and only if its content sequence is
\begin{equation*}
(\dots,\underbrace{1,\dots,1}_{\alpha_1-\alpha_2}, \dots, \underbrace{r-1,\dots,r-1}_{\alpha_{r-1}-\alpha_r},   
 \underbrace{r,\dots,r}_{\alpha_r},
 \underline{r},\underbrace{r,\dots,r}_{\alpha_r+m},
 \underbrace{r-1,\dots,r-1}_{\alpha_{r-1}-\alpha_r},\dots,
 \underbrace{1,\dots,1}_{\alpha_1-\alpha_2},\dots).    
\end{equation*}
\end{rem}
\begin{eg}
\label{eg-content}
Suppose $\lambda=(3,0|3,1)$. The following figure illustrates the Young diagram of $\lambda$ where each box is filled with its content.
\[
\begin{ytableau}
0 & 1 & 2 & 3 \\
-1 & 0 \\
-2 & -1 \\
-3
\end{ytableau}
\raisebox{-1.5em}{.}
\]
The content sequence of $\lambda$ is $({1,1},{2},\underline{2},{1,1,1}).$
\end{eg}
\section{Bijective Proofs of Dimension Identities}
\label{sec:bij}
For the formulation of the following bijection, it is convenient to fix a 
labelling of the cells of the Young diagram of a partition $\lambda$. For $a \in \mathbb{Z}$, we label the cells along a diagonal (i.e the cells with the same content) with the content $a$ as $(i,a)$, $1 \leq i \leq x_a(\lambda)$, where $x_a(\lambda)$ is the number of boxes in $\lambda$ with the content $a$. 
The following figure 
illustrates this labelling of the cells along each diagonal for
$\lambda = (5,3,1)$. 
\begin{figure}[h]
\begin{center}
$$
\begin{matrix}
\resizebox{!}{3.4cm}{%
\begin{tikzpicture}[node distance=0 cm,outer sep = 0pt]
\node[br] (1) at (0,0) {\Large {\color{white}-}(1,0)};
 \node[br] (3) at (1.5, 0) {\Large (1,1)}; 
  \node[br] (6) at (2.9,0) {\Large (1,2)};
  \node[br] (5) at (4.3, 0) {\Large (1,3)};
  \node[br] (4) at (5.7,0) {\Large (1,4)};
  \node[br] (11) at (0,-1.25) {\Large (1,-1)};
 \node[br] (13) at (1.5, -1.25) {\Large (2,0)}; 
  \node[br] (16) at (2.9,-1.25) {\Large (2,2)};
  \node[br] (21) at (0,-2.5) {\Large (1,-2)};
\end{tikzpicture} 
}
\end{matrix}
$$
\end{center}
\end{figure}
\begin{lem} 
\label{lem:bij}
Let $\lambda=(\alpha|\beta)$ be a partition of length at most $n$. 
Then there exists a bijection $\phi:C_{(\alpha+m|\beta)} \rightarrow C_{(\alpha|\beta+m)}$ such that $n(\phi(T))=n(T)+m|\lambda|$. 
\end{lem}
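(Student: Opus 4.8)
The plan is to construct $\phi$ one diagonal at a time, using the labelling $(i,a)$ of the cells fixed just above: a content tabloid is simply an integer entered in each labelled cell, and the range of admissible entries depends only on the content $a$ of the diagonal the cell lies on. Before starting I would fix the two ambient bounds. Since $\lambda=(\alpha|\beta)$ has length at most $n$ and passing from $\lambda$ to $(\alpha+m|\beta)$ leaves the leg coordinates $\beta$ untouched, $(\alpha+m|\beta)$ still has length at most $n$, whereas $(\alpha|\beta+m)$ has length at most $n+m$. Accordingly I read the entries of tabloids in $C_{(\alpha+m|\beta)}$ as bounded by $n$ and those in $C_{(\alpha|\beta+m)}$ as bounded by $n+m$; this is the natural convention (the $\mathfrak{gl}_n$ versus $\mathfrak{gl}_{n+m}$ side of the identity), and indeed the bound on $(\alpha|\beta+m)$ must exceed that on $(\alpha+m|\beta)$ by $m$ for the two sets to have the same size.

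The structural input is a comparison of the two content sequences. Writing $x_c(\mu)$ for the number of cells of content $c$ in $\mu$, the description in \eqref{x-1} yields, for any $\mu=(\gamma|\delta)$,
\[
x_a(\mu)=\#\{i:\gamma_i\ge a\}\ (a>0),\qquad x_{-d}(\mu)=\#\{i:\delta_i\ge d\}\ (d>0),\qquad x_0(\mu)=\rk(\mu).
\]
Substituting $(\gamma|\delta)=(\alpha+m|\beta)$ and $(\gamma|\delta)=(\alpha|\beta+m)$ and comparing diagonal by diagonal gives the single identity
\[
x_c(\alpha|\beta+m)=x_{c+m}(\alpha+m|\beta)\qquad (c\in\mathbb Z),
\]
so the sequence of diagonal sizes of $(\alpha|\beta+m)$ is that of $(\alpha+m|\beta)$ translated by $m$. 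In particular the two diagrams have the same number of cells.

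Now I define $\phi$. By the displayed identity the diagonal of content $a$ in $(\alpha+m|\beta)$ and the diagonal of content $a-m$ in $(\alpha|\beta+m)$ contain equally many cells, so I send the $i$-th cell $(i,a)$ of the former to the $i$-th cell $(i,a-m)$ of the latter and replace its entry $v$ by $v+m$. The entries admissible at $(i,a)$ fill the integer interval $[\,1-a,\,n\,]$, while those admissible at $(i,a-m)$ fill $[\,1-(a-m),\,n+m\,]=[\,1-a,\,n\,]+m$; hence $v\mapsto v+m$ is an order-preserving bijection between these two ranges, and $\phi(T)$ is again a content tabloid. The inverse translates the diagonals back and subtracts $m$, so $\phi$ is a bijection. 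Since every entry is raised by exactly $m$, the norm increases by $m$ times the number of cells, that is $n(\phi(T))=n(T)+m\,|(\alpha+m|\beta)|$, which is the asserted norm shift.

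The one genuinely substantive step is the content-sequence identity $x_c(\alpha|\beta+m)=x_{c+m}(\alpha+m|\beta)$, together with the observation that the admissible entry-ranges are carried onto one another by the \emph{same} translation $v\mapsto v+m$; granting these, well-definedness, bijectivity and the norm count are all immediate. I expect the main care to be needed at the boundary diagonals — content $0$ and the two extreme diagonals where the runs in \eqref{x-1} begin and end — to confirm that no diagonal is left unpaired, and to check the degenerate rank-zero (empty) case.
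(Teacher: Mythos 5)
Your construction coincides with the paper's own proof of \cref{lem:bij}: both compare the two content sequences, which differ by a shift of $m$, then map the diagonal of content $a$ of $(\alpha+m|\beta)$ to the diagonal of content $a-m$ of $(\alpha|\beta+m)$ and add $m$ to every entry. (Your orientation of the shift is in fact the correct one: the displayed formula \eqref{phi}, $\phi(T)(i,a)=T(i,a-m)+m$, is inconsistent with the content-sequence relation stated just below it and with \cref{eg-bijection}; it should read $\phi(T)(i,a)=T(i,a+m)+m$, which is exactly your map.) Your check that $v\mapsto v+m$ carries the admissible range $[1-a,\,n]$ at content $a$ onto the admissible range $[1-(a-m),\,n+m]$ at content $a-m$ is the same well-definedness and invertibility argument as in the paper.

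The one genuine problem is your final sentence. The shift you correctly compute is $m\,|(\alpha+m|\beta)| = m\bigl(|\lambda|+m\,\rk(\lambda)\bigr)$, and this is \emph{not} the asserted $m|\lambda|$ with $\lambda=(\alpha|\beta)$; they differ by $m^{2}\rk(\lambda)$. Here the defect lies in the printed statement (and in the paper's proof, which asserts $n(\phi(T))=n(T)+m|\lambda|$ without computation), not in your bookkeeping: in \cref{eg-bijection} one has $n(T)=10$ and $n(\phi(T))=19$, a shift of $9=m\,|(\alpha+m|\beta)|$, whereas $m|\lambda|=7$. Indeed no bijection can realize the printed constant at all, since comparing minimal norms on the two sides forces any constant shift to equal $\sum_{u\in(\alpha+m|\beta)}c(u)-\sum_{u\in(\alpha|\beta+m)}c(u)=m\,|(\alpha+m|\beta)|$. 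The discrepancy is harmless downstream: \cref{cor:gf} invokes the lemma with $\beta=\alpha$, i.e.\ for the conjugate pair $(\alpha+m|\alpha)=\lambda'$ and $(\alpha|\alpha+m)=\lambda$ with $\lambda$ an $m$-asymmetric partition, and there $|(\alpha+m|\alpha)|=|\lambda|$, so the corollary's shift $m|\lambda|$ is exactly your corrected value. So rather than claiming your computed shift ``is the asserted norm shift,'' you should state the lemma with the corrected shift $m\,|(\alpha+m|\beta)|$; as written, your last step asserts an equality that is false whenever $m\neq 0$ and $\lambda\neq\emptyset$.
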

\begin{proof}
Suppose $\lambda^{(m)} \coloneqq (\alpha+m|\beta)$ and $\lambda_{(m)} \coloneqq (\alpha|\beta+m)$ are two partitions of length at most $n$ and $n+m$ respectively. 
Define a map $\phi:C_{\lambda^{(m)}} \rightarrow C_{\lambda_{(m)}}$ by 
\begin{equation}
\label{phi}
\phi(T)(i,a) = T(i,a-m)+m, \, \quad a \in \mathbb{Z}, \, 1 \leq i \leq x_{a}(\lambda^{(m)}),
\end{equation}
where $x_{a}(\lambda^{(m)})$ is the number of boxes in $\lambda^{(m)}$ of content $a$.
If $
(\dots,x_{-2},x_{-1},x_0,x_1,x_2,\dots)$ is the content sequence of $\lambda$, then observe that the content sequences of $\lambda^{(m)}$ and $\lambda_{(m)}$ are 
\[
(\dots,x_{-2},x_{-1},
\underline{x_0},\underbrace{x_0,\dots,x_0}_{m},x_1,x_2,\dots)
\] 
and 
\[(\dots,x_{-2},x_{-1},
\underbrace{x_0,\dots,x_0}_{m}, \underline{x_0},x_1,x_2,\dots)
\]
respectively.
Since the content sequence of a partition $\lambda^{(m)}$ differs from that of $\lambda_{(m)}$ by a shift of length $m$ (i.e \, $x_{a}(\lambda^{(m)}) = x_{a-m}(\lambda_{(m)}), \, a \in \mathbb{Z}$) and $a \leq \phi(T)(i,a) \leq m+n$, the map $\phi$ is well defined. Also, by \eqref{phi}, one can see that $\phi$ is clearly invertible and $n(\phi(T))=n(T)+m|\lambda|$ holds.
\end{proof}
\begin{eg} 
\label{eg-bijection}
Suppose $\lambda=(3,0|2,0)$, $n=3$ and $m=1$. Let $\lambda^{(1)}=(4,1|2,0)$ and $\lambda_{(1)}=(3,0|3,1)$ be two partitions of length at most $3$ and $4$ respectively. The content sequences of $\lambda^{(1)}$ and $\lambda_{(1)}$ are $(1,1,\underline{2},2,1,1,1)$
and $(1,1,2,\underline{2},1,1,1)$ respectively. The following figure illustrates the image of a content tabloid $T$ of shape $\lambda^{(1)}$ under the bijection $\phi$ given in \cref{lem:bij}.
\begin{figure}[htbp!]
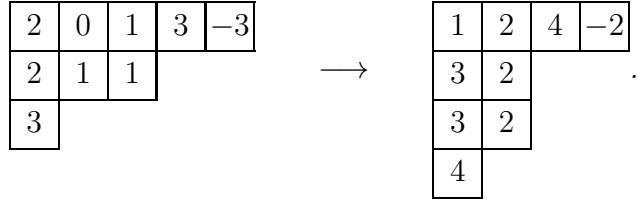

\begin{center}
\[
 \begin{ytableau}
2 & 0 & 1 & 3 & -3  \\
2 & 1 & 1  \\
3  
\end{ytableau}
\qquad
\raisebox{-2.0ex}{$\longrightarrow$} 
\qquad
\begin{ytableau}
1 & 2 & 4 & -2\\
3 & 2 \\
3 & 2\\
4 
\end{ytableau}
\raisebox{-2ex}{.}
\]
\end{center}
\caption{Content tabloids $T$ and $\phi(T)$}
\end{figure}
\end{eg}
\begin{rem}
In~\cite{KRATTENTHALER199966}, Krattenthaler gave an involution principle-free bijective proof of the hook content formula \eqref{dim}. For a partition $\pi$ of length at most $n$, he described a bijection $\gamma_{\pi} : 
H_{\pi} \times \SSYT_{\pi}  \rightarrow C_{\pi}$. 
Suppose $\phi:C_{(\alpha+m|\beta)} \rightarrow C_{(\alpha|\beta+m)}$ is the bijection defined in \cref{lem:bij}. 
Then 
\[ \gamma_{(\alpha|\beta+m)}^{-1} \circ \phi \circ \gamma_{(\alpha+m|\beta)} : 
H_{(\alpha+m|\beta)} \times \SSYT_{(\alpha+m|\beta)} 
\rightarrow
H_{(\alpha|\beta+m)} \times \SSYT_{(\alpha|\beta+m)}
\]
will also be a bijection. However, the following example shows that for a fixed hook tabloid $h_0 \in H_{(\alpha+m|\beta)}$,
the map $\psi:\SSYT_{(\alpha+m|\beta)} \rightarrow \SSYT_{(\alpha|\beta+m)}$
by 
\[
\psi(T) = p(\gamma_{(\alpha|\beta+m)}^{-1} \circ \phi \circ \gamma_{(\alpha+m|\beta)}(h_0,T)),
\]
where $p:H_{(\alpha|\beta+m)} \times \SSYT_{(\alpha|\beta+m)} \rightarrow \SSYT_{(\alpha|\beta+m)}$ is a projection map and is given by $p(h,S)=S$, is not a bijection.
\end{rem}
\begin{eg} Suppose $(\alpha|\beta)=(1|1)$, $n=2$, $m=1$ and $\phi:C_{(2|1)} \rightarrow C_{(1|2)}$ is the bijection defined in \cref{lem:bij}. 
Let $\gamma_{(2|1)} : 
H_{(2|1)} \times \SSYT_{(2|1)}  \rightarrow C_{(2|1)}$ and $\gamma_{(1|2)} : 
H_{(1|2)} \times \SSYT_{(1|2)}  \rightarrow C_{(1|2)}$
be bijections defined in~\cite{KRATTENTHALER199966}. 
Fix $h_0$ to be the hook tabloid of shape $(2|1)$, where each box is filled with zero. Then we have  
\[
\Vast( \begin{ytableau}
0 & 0 & 0\\
        0
    \end{ytableau} \,\, , \,\, \begin{ytableau}
     1 & 1 & 2\\
        2   
    \end{ytableau} \Vast) \xlongrightarrow{\gamma_{(2|1)}}  
    \begin{ytableau}
        1 & 1 & 2\\
        2
    \end{ytableau} \xlongrightarrow{\phi}  
    \begin{ytableau}
        2 & 3 \\
        2\\
        3
    \end{ytableau} \xlongrightarrow{\gamma^{-1}_{(1|2)}} 
   \raisebox{0em}{ \Vvast( \raisebox{1em}{\text{\begin{ytableau}
        1 & 0 \\
        0\\
        0
    \end{ytableau} \raisebox{-0.75em}{,} \begin{ytableau}
        1 & 3 \\
        2\\
        3
    \end{ytableau}}} \Vvast)} \raisebox{0em}{,}
\]
and 
\[
\Vast( \begin{ytableau}
0 & 0 & 0\\
        0
    \end{ytableau} \,\, , \,\, \begin{ytableau}
     1 & 2 & 2\\
        2   
    \end{ytableau} \Vast) \xlongrightarrow{\gamma_{(2|1)}}  
    \begin{ytableau}
        1 & 2 & 2\\
        2
    \end{ytableau} \xlongrightarrow{\phi}  
    \begin{ytableau}
        3 & 3 \\
        2\\
        3
    \end{ytableau} \xlongrightarrow{\gamma^{-1}_{(1|2)}} 
   \raisebox{0em}{ \Vvast( \raisebox{1em}{\text{\begin{ytableau}
        1 & 0 \\
        1\\
        0
    \end{ytableau} \raisebox{-0.75em}{,} \begin{ytableau}
        1 & 3 \\
        2\\
        3
    \end{ytableau}}} \Vvast)}.
\]
So, $\psi=p(\gamma^{-1}_{(1|2)} \circ \phi \circ \gamma_{(2|1)}): \SSYT_{(2|1)} \rightarrow \SSYT_{(1|2)}$ is not injective:
\[
\psi \vvast(\begin{ytableau}
     1 & 2 & 2\\
        2   
    \end{ytableau} \vvast) = \psi \vvast(\begin{ytableau}
     1 & 2 & 2\\
        2   
    \end{ytableau} \vvast) = \raisebox{1em}{\text{\begin{ytableau}
        1 & 3 \\
        2\\
        3
    \end{ytableau}}}.
\]
\end{eg}
Since the number of content tabloids of a fixed shape is given by \eqref{count:content}, we have the following corollary of \cref{lem:bij}. 
\begin{cor} 
\label{cor:content}
Let $\lambda = (\alpha|\beta)$ be a partition of length at most $n$. Then
    \[
    \prod_{u \in (\alpha+m|\beta)} (n+c(u)) =
    \prod_{u \in (\alpha|\beta+m)} (m+n+c(u)). 
    \]
\end{cor}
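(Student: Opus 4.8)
The plan is to deduce this corollary directly from \cref{lem:bij} by counting. The statement is an equality of two products, and each product has an immediate combinatorial meaning: by \eqref{count:content}, the left-hand side $\prod_{u \in (\alpha+m|\beta)} (n+c(u))$ is precisely the number of content tabloids of shape $\lambda^{(m)} = (\alpha+m|\beta)$ with entries bounded by $n$, i.e. $|C_{\lambda^{(m)}}|$. The right-hand side, however, is shifted: the factors are $m+n+c(u)$ rather than $n+c(u)$, and this is exactly the count \eqref{count:content} applied to shape $\lambda_{(m)} = (\alpha|\beta+m)$ with the bound $n$ replaced by $n+m$. That is, $\prod_{u \in (\alpha|\beta+m)} (m+n+c(u)) = |C_{\lambda_{(m)}}|$, where the content tabloids of $\lambda_{(m)}$ are taken with entries bounded by $n+m$ (consistent with $\lambda_{(m)}$ having length at most $n+m$, as noted in the proof of \cref{lem:bij}).

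First I would make this identification of both sides explicit, citing \eqref{count:content} once for each shape with its appropriate bound. Then the heart of the argument is simply that \cref{lem:bij} furnishes a bijection $\phi : C_{(\alpha+m|\beta)} \to C_{(\alpha|\beta+m)}$, and a bijection between two finite sets forces their cardinalities to agree, so $|C_{\lambda^{(m)}}| = |C_{\lambda_{(m)}}|$. Chaining the three equalities
\[
\prod_{u \in (\alpha+m|\beta)} (n+c(u)) = |C_{\lambda^{(m)}}| = |C_{\lambda_{(m)}}| = \prod_{u \in (\alpha|\beta+m)} (m+n+c(u))
\]
yields the claim immediately. No further calculation is required; the norm-shift property $n(\phi(T)) = n(T) + m|\lambda|$ from \cref{lem:bij} is not needed here (it would only matter for a $q$-analogue tracking the weighted count), so I would not invoke it.

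The only point that deserves care — and the step I expect to be the main (though modest) obstacle — is matching the two different upper bounds on the tabloid entries. The set $C_\lambda$ in the definition is parametrized by a bound $n$, and in \cref{lem:bij} the domain uses bound $n$ while the codomain $C_{(\alpha|\beta+m)}$ implicitly uses bound $n+m$, precisely so that $\phi(T)(i,a) = T(i,a-m)+m$ lands in the admissible range $a \le \phi(T)(i,a) \le m+n$. I would therefore state clearly that the factor $m+n+c(u)$ on the right counts content tabloids of $(\alpha|\beta+m)$ with the larger bound $n+m$, so that the formula \eqref{count:content} with $n \rightsquigarrow n+m$ reproduces exactly $\prod_{u}(m+n+c(u))$. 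With that bookkeeping in place, the corollary is an immediate consequence of \cref{lem:bij}.
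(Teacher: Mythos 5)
Your proof is correct and is precisely the paper's argument: the paper derives this corollary immediately from \cref{lem:bij} together with the counting formula \eqref{count:content}, exactly as you do, with the two sides counting content tabloids of shapes $(\alpha+m|\beta)$ and $(\alpha|\beta+m)$ under the bounds $n$ and $n+m$ respectively. Your added bookkeeping about the shifted entry bound on the codomain is a fair elaboration of what the paper leaves implicit, and you are right that the norm-shift property of $\phi$ is not needed here.
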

\begin{rem}
    If $m \neq 0$, then the product of the hook lengths of all the cells in $(\alpha+m|\beta)$ may not be equal to that in $(\alpha|\beta+m)$. So, by \eqref{dim}, we note that $\dm F^{(n)}_{(\alpha+m|\beta)}$ may not be the same as $\dm F^{(m+n)}_{(\alpha|\beta+m)}$.
\end{rem}

From \cref{lem:bij}, we obtain the following equality between the generating functions of content tabloids of shape of a partition and its conjugate.
\begin{cor}
\label{cor:gf} Let $\lambda
$ be a $m-$asymmetric partition of length at most $m+n$. 
Then
\[
\sum_{T \in C_{\lambda}} q^{n(T)} 
=  
\sum_{T \in C_{\lambda'}} 
q^{n(T)+m|\lambda|}.
\]
\end{cor}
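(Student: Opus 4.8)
The plan is to deduce \cref{cor:gf} directly from \cref{lem:bij} by applying it in the special situation where the shifted partition becomes the conjugate of the original one. First I would observe that for an $m$-asymmetric partition $\lambda$, \cref{def:z-asym} together with the definition of the conjugate tells us how the Frobenius coordinates of $\lambda$ and $\lambda'$ are related: if $\lambda = (\gamma \mid \gamma + m)$ in Frobenius coordinates for some strict partition $\gamma$, then transposing the Young diagram swaps the arm and leg coordinates, so $\lambda' = (\gamma + m \mid \gamma)$. The key point is that this matches exactly the two shapes appearing in \cref{lem:bij}: taking $\alpha = \gamma$ and $\beta = \gamma$ there, the source shape $(\alpha + m \mid \beta) = (\gamma + m \mid \gamma)$ equals $\lambda'$, while the target shape $(\alpha \mid \beta + m) = (\gamma \mid \gamma + m)$ equals $\lambda$.

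With this identification in hand, the bijection $\phi : C_{\lambda'} \to C_{\lambda}$ supplied by \cref{lem:bij} satisfies $n(\phi(T)) = n(T) + m \lvert \lambda' \rvert$. Since conjugation preserves the number of cells, $\lvert \lambda' \rvert = \lvert \lambda \rvert$, so in fact $n(\phi(T)) = n(T) + m \lvert \lambda \rvert$ for every content tabloid $T \in C_{\lambda'}$. I would then assemble the generating function identity by summing the monomial $q^{n(\cdot)}$ over all content tabloids. Because $\phi$ is a bijection, reindexing the sum over $C_{\lambda}$ along $\phi$ gives
\[
\sum_{S \in C_{\lambda}} q^{n(S)} = \sum_{T \in C_{\lambda'}} q^{n(\phi(T))} = \sum_{T \in C_{\lambda'}} q^{n(T) + m \lvert \lambda \rvert},
\]
which is precisely the claimed equality.

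Strictly speaking there is a small bookkeeping subtlety worth flagging as the only real obstacle: \cref{lem:bij} is stated for a partition $\lambda = (\alpha \mid \beta)$ of length at most $n$, and one must check that the length and ambient-rank hypotheses are met for the application above. Here the roles are played by $\lambda'$ and $\lambda$; since $\lambda$ is assumed to have length at most $m+n$, the source shape $\lambda' = (\gamma + m \mid \gamma)$ fits within $m+n$ columns and hence the lemma applies with the ambient bound $m+n$ in place of $n$ (so that content tabloid entries range up to $m+n$ on one side and $m + (m+n)$ would be the naive target bound, but the content-sequence shift makes the target bound exactly $m+n$ as well). Once this alignment of parameters is verified, no further computation is required — the corollary is a formal consequence of the weight-shifting bijection and the equality $\lvert \lambda \rvert = \lvert \lambda' \rvert$.
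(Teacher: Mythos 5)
Your core argument --- applying \cref{lem:bij} with $\alpha=\beta=\gamma$, where $\lambda=(\gamma|\gamma+m)$, so that the source shape $(\gamma+m|\gamma)$ is $\lambda'$ and the target $(\gamma|\gamma+m)$ is $\lambda$, then reindexing the generating function along $\phi$ and using $|\lambda'|=|\lambda|$ --- is exactly the paper's (implicit) derivation of \cref{cor:gf}, and that part is fine. One remark on the norm shift: you quote the lemma as giving $n(\phi(T))=n(T)+m|\lambda'|$, whereas the lemma as printed asserts the shift is $m|(\alpha|\beta)|=m|(\gamma|\gamma)|$, which differs from $m|\lambda'|$ by $m^2\,\rk(\lambda)$. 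Your version is in fact the correct one: $\phi$ adds $m$ to each of the $|\lambda'|$ entries, and in the paper's own \cref{eg-bijection} the shift is $19-10=9=m\cdot|\lambda^{(m)}|$ rather than $m\cdot|(\alpha|\beta)|=7$. So this discrepancy is a misprint in the lemma, not an error on your side, but it should have been flagged rather than silently repaired.

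The genuine flaw is in your final bookkeeping paragraph. You apply the lemma ``with the ambient bound $m+n$ in place of $n$,'' i.e.\ with source tabloids $C_{\lambda'}$ having entries at most $m+n$; but then the target tabloids have entries at most $2m+n$, because $\phi$ increases every entry by exactly $m$. Your parenthetical claim that ``the content-sequence shift makes the target bound exactly $m+n$ as well'' is false --- shifting contents cannot cancel the $+m$ added to every entry, so the target bound always exceeds the source bound by $m$. The correct alignment is the opposite one: apply the lemma with its $n$ equal to the corollary's $n$. The hypothesis is satisfied because $\ell(\lambda)=\gamma_1+m+1\le m+n$ forces $\ell(\lambda')=\gamma_1+1\le n$; then $C_{\lambda'}$ consists of fillings with entries at most $n$ and $C_{\lambda}$ of fillings with entries at most $m+n$, which is precisely the convention under which the corollary is stated ($\lambda$ of length at most $m+n$) and under which it is used in the proof of \cref{thm:3}, where $C_{\lambda'}$ pairs with $\prod_{u\in\lambda'}[n+c(u)]$ and $C_{\lambda}$ with $\prod_{u\in\lambda}[m+n+c(u)]$. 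With that single correction, your argument goes through verbatim.
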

\begin{thm}[{\cite[Theorem 2.1]{erick}}] 
\label{thm:dim}
Let $(\alpha|\beta)$ be a partition of length at most $p$ and with the first part at most $q$. 
Then 
 \[
 \dm F^{(p)}_{(\alpha+m|\beta)} 
 \dm F^{(q)}_{(\beta+m|\alpha)} 
 =
 \dm F^{(p+m)}_{(\alpha|\beta+m)} 
 \dm F^{(q+m)}_{(\beta|\alpha+m)} 
\]
\end{thm}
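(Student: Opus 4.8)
The plan is to reduce the entire theorem to \cref{cor:content} applied twice, together with the hook-content formula \eqref{dim}, and to show that the ``troublesome'' hook-length products cancel in pairs across the two sides of the identity. Recall that by \eqref{dim} each dimension factors as $\dm F^{(n)}_{\pi}=\bigl(\prod_{u\in\pi}(n+c(u))\bigr)/\bigl(\prod_{u\in\pi}h(u)\bigr)$, so I would first write all four dimensions in the statement in this product form. The content products are exactly what \cref{cor:content} controls: applied to $(\alpha|\beta)$ with ambient dimension $p$ it gives $\prod_{u\in(\alpha+m|\beta)}(p+c(u))=\prod_{u\in(\alpha|\beta+m)}(m+p+c(u))$, and applied to the ``swapped'' partition $(\beta|\alpha)$ with ambient dimension $q$ it gives $\prod_{u\in(\beta+m|\alpha)}(q+c(u))=\prod_{u\in(\beta|\alpha+m)}(m+q+c(u))$. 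These two identities match the content products on the left-hand side of the theorem with those on the right-hand side exactly.

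Next I would handle the hook-length products, which do \emph{not} match factor-by-factor; this is where the main work lies. The key structural observation is that $(\alpha+m|\beta)$ and $(\beta+m|\alpha)$ are conjugate partitions of each other: transposing a Young diagram swaps arms with legs and negates contents, so swapping the two Frobenius coordinates and leaving the arm-leg shift $m$ in place realizes conjugation. Concretely I expect $(\beta+m|\alpha)=\bigl((\alpha+m|\beta)\bigr)'$, and likewise $(\beta|\alpha+m)=\bigl((\alpha|\beta+m)\bigr)'$. Since conjugation is a bijection of the cells that preserves hook lengths (the hook length of a cell equals that of its transpose), we get
\[
\prod_{u\in(\alpha+m|\beta)}h(u)=\prod_{u\in(\beta+m|\alpha)}h(u)
\quad\text{and}\quad
\prod_{u\in(\alpha|\beta+m)}h(u)=\prod_{u\in(\beta|\alpha+m)}h(u).
\]
Therefore the product of hook-length denominators on the left-hand side equals $\bigl(\prod_{u\in(\alpha+m|\beta)}h(u)\bigr)^2$ and on the right-hand side equals $\bigl(\prod_{u\in(\alpha|\beta+m)}h(u)\bigr)^2$, and these two squared products are related by conjugation of a single $m$-asymmetric shape.

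To close the argument I would combine the pieces. The left-hand side of the theorem equals
\[
\frac{\prod_{u\in(\alpha+m|\beta)}(p+c(u))\,\prod_{u\in(\beta+m|\alpha)}(q+c(u))}
{\prod_{u\in(\alpha+m|\beta)}h(u)\,\prod_{u\in(\beta+m|\alpha)}h(u)},
\]
and similarly for the right-hand side with $p,q$ replaced by $p+m,q+m$ and the coordinates shifted. The content numerators agree by the two applications of \cref{cor:content} described above. For the hook denominators I would verify that the shape $(\alpha+m|\beta)$ and the shape $(\alpha|\beta+m)$ are conjugate to each other (indeed $(\alpha|\beta+m)$ is the conjugate of $(\alpha+m|\beta)$, since conjugating swaps the Frobenius coordinates), so their hook-length products are equal and the denominators cancel between the two sides. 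The main obstacle I anticipate is the bookkeeping needed to confirm the conjugation relations among the four Frobenius-coordinate shapes precisely — in particular checking that the $m$-shift lands on the correct coordinate after transposition, and confirming the length/first-part constraints (length $\le p$, first part $\le q$) are exactly what make all four modules well defined with the stated ambient ranks. Once those conjugation identities are nailed down, the theorem follows by direct substitution with no further computation.
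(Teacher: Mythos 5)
Your overall strategy is the same as the paper's: write all four dimensions via the hook content formula \eqref{dim}, match the content products by applying \cref{cor:content} once to $(\alpha|\beta)$ with $n=p$ and once to $(\beta|\alpha)$ with $n=q$, and dispose of the hook-length products using the fact that conjugation preserves the multiset of hook lengths. However, your conjugation bookkeeping is wrong, and the intermediate equalities you derive from it are false. Conjugation swaps the Frobenius coordinates \emph{together with their shifts}: $\bigl((\alpha+m|\beta)\bigr)'=(\beta|\alpha+m)$ and $\bigl((\beta+m|\alpha)\bigr)'=(\alpha|\beta+m)$. It does not ``leave the arm--leg shift $m$ in place,'' so your claims $(\beta+m|\alpha)=\bigl((\alpha+m|\beta)\bigr)'$ and, in your final paragraph, ``$(\alpha|\beta+m)$ is the conjugate of $(\alpha+m|\beta)$'' are both incorrect. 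Concretely, take $m=1$, $\alpha=(1)$, $\beta=(0)$, so $(\alpha|\beta)=(2)$: then $(\alpha+m|\beta)=(2|0)=(3)$ has hook product $6$, while $(\beta+m|\alpha)=(1|1)=(2,1)$ has hook product $3$; thus your claimed equality $\prod_{u\in(\alpha+m|\beta)}h(u)=\prod_{u\in(\beta+m|\alpha)}h(u)$ fails, and $(3)$ is not conjugate to $(2,1)$. Your final claim also contradicts the paper's remark following \cref{cor:content}, which notes that for $m\neq 0$ the hook products of $(\alpha+m|\beta)$ and $(\alpha|\beta+m)$ need not coincide (indeed, if they were conjugate, the two dimensions in that remark would always be equal).

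The repair is the crossed pairing: the hook denominators cancel \emph{across} the two sides of the identity, not within each side. Using the correct relations $\bigl((\alpha+m|\beta)\bigr)'=(\beta|\alpha+m)$ and $\bigl((\beta+m|\alpha)\bigr)'=(\alpha|\beta+m)$, one gets
\[
\prod_{u\in(\alpha+m|\beta)}h(u)\,\prod_{u\in(\beta+m|\alpha)}h(u)
=\prod_{u\in(\beta|\alpha+m)}h(u)\,\prod_{u\in(\alpha|\beta+m)}h(u),
\]
i.e.\ the product of the two denominators on the left-hand side equals the product of the two denominators on the right-hand side, even though no individual denominator on the left matches its partner directly above or below it. With this correction, your two (correct) applications of \cref{cor:content} for the numerators finish the argument, and the resulting proof coincides with the paper's.
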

\begin{proof} 
By \eqref{dim}, we have
\begin{equation}
\label{5}
  \dm F^{(p)}_{(\alpha+m|\beta)} 
 \dm F^{(q)}_{(\beta+m|\alpha)}  = 
 \prod_{u \in (\alpha+m|\beta)} \frac{p+c(u)}{h(u)} 
\prod_{(v) \in (\beta+m|\alpha)} \frac{q+c(v)}{h(v)}.  
\end{equation}

By \cref{cor:content} and the fact that conjugate partitions have the same multiset of hook lengths, we see that the right side of \eqref{5} is the same as
\[
 \prod_{u \in (\alpha|\beta+m)} \frac{p+m+c(u)}{h(u)} 
\prod_{(v) \in (\beta|\alpha+m)} \frac{q+m+c(v)}{h(v)}.
\]
So, by \eqref{dim} \cref{thm:dim} holds true.
\end{proof}
\begin{thm}[{\cite[Theorem 2.2]{erick}}]  Let $\lambda$ be a partition of length at most $n$. Then 
   we have $\dim F^{(n)}_{\lambda}=\dim F^{(m+n)}_{\lambda'}$ for each $n \geq \ell(\lambda)$ if and only if $\lambda'$ is $m-$asymmetric. 
\end{thm}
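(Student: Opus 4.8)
The plan is to reduce both implications to a single symmetry property of the content multiset of $\lambda$, and then read that property off the Frobenius coordinates. I would use three consequences of the hook content formula \eqref{dim}: that conjugate partitions share the same multiset of hook lengths, that conjugation negates contents (so $\{c(v):v\in\lambda'\}=\{-c(u):u\in\lambda\}$ as multisets), and that $x_c(\lambda)$ counts the boxes of content $c$. After cancelling the common hook product, these give the equivalence
\[
\dim F^{(n)}_{\lambda}=\dim F^{(m+n)}_{\lambda'}\ \Longleftrightarrow\ \prod_{u\in\lambda}\bigl(n+c(u)\bigr)=\prod_{u\in\lambda}\bigl(n+m-c(u)\bigr).
\]
Since both sides are monic polynomials in $n$ of degree $|\lambda|$, requiring equality for all $n\ge\ell(\lambda)$ is the same as requiring it identically in $n$. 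I would also record the symmetry $(\lambda,m)\leftrightarrow(\lambda',-m)$: substituting $n\mapsto n+m$ shows the dimension equality for $(\lambda,m)$ is equivalent to that for $(\lambda',-m)$, and $\lambda'$ is $m$-asymmetric precisely when $\lambda$ is $(-m)$-asymmetric. This lets me assume $m\ge 0$ without loss of generality.

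For the ``if'' direction I would suppose $\lambda'$ is $m$-asymmetric, so that $\lambda=(\alpha+m\,|\,\alpha)$ and $\lambda'=(\alpha\,|\,\alpha+m)$ for a strict partition $\alpha$. Applying \cref{cor:content} to the self-conjugate partition $(\alpha\,|\,\alpha)$ gives directly
\[
\prod_{u\in\lambda}\bigl(n+c(u)\bigr)=\prod_{v\in\lambda'}\bigl(m+n+c(v)\bigr),
\]
and dividing by the common hook product and invoking \eqref{dim} yields $\dim F^{(n)}_{\lambda}=\dim F^{(m+n)}_{\lambda'}$ for every $n\ge\ell(\lambda)$. This step is essentially immediate, being a repackaging of \cref{lem:bij}, whose bijection $\phi$ shifts norms by exactly $m|\lambda|$ while matching the content data on the two shapes.

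For the ``only if'' direction the polynomial identity above forces the two root multisets to coincide, $\{-c(u):u\in\lambda\}=\{c(u)-m:u\in\lambda\}$, which in the notation $x_c(\lambda)$ reads
\[
x_c(\lambda)=x_{m-c}(\lambda)\qquad\text{for all }c\in\mathbb{Z},
\]
i.e.\ the content sequence of $\lambda$ is symmetric about $m/2$. Writing $\lambda=(\alpha\,|\,\beta)$ with rank $r$, I would use $x_0(\lambda)=r$, $x_c(\lambda)=\#\{i:\alpha_i\ge c\}$ for $c\ge 1$, and $x_c(\lambda)=\#\{j:\beta_j\ge -c\}$ for $c\le -1$. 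Unimodality together with $x_m(\lambda)=x_0(\lambda)=r$ pins the peak value $r$ on all of $0\le c\le m$, so every $\alpha_i\ge m$ and $\alpha-m\coloneqq(\alpha_1-m,\dots,\alpha_r-m)$ is again a strict partition. Comparing $x_{d+m}(\lambda)=x_{-d}(\lambda)$ for $d\ge 1$ then gives $\#\{i:\alpha_i\ge d+m\}=\#\{j:\beta_j\ge d\}$ for all $d\ge 1$, so $\alpha-m$ and $\beta$ have the same conjugate and hence are equal. Thus $\alpha=\beta+m$, i.e.\ $\lambda=(\beta+m\,|\,\beta)$ and $\lambda'=(\beta\,|\,\beta+m)$ is $m$-asymmetric; the explicit description in \cref{cotent} can be used to present this last step cleanly.

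I expect the combinatorial extraction in the ``only if'' direction to be the main obstacle: passing from the bare symmetry $x_c(\lambda)=x_{m-c}(\lambda)$ to the structural equality $\alpha=\beta+m$ requires translating a statement about the content multiset into one about Frobenius coordinates, handling the plateau that the symmetry creates near the peak, and verifying that the reduction to $m\ge 0$ is harmless. By contrast the ``if'' direction falls out of \cref{cor:content} with essentially no extra work.
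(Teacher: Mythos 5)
Your proof is correct and follows essentially the same route as the paper: reduce the dimension equality to a content-product identity using the shared hook multiset of $\lambda$ and $\lambda'$, extract the symmetry $x_c(\lambda)=x_{m-c}(\lambda)$ from the polynomial identity in $n$, conclude that $\lambda'$ is $m$-asymmetric via unimodality of the content sequence, and get the converse from \cref{cor:content}. The differences are cosmetic: you invoke \cref{cor:content} directly for the converse (avoiding the squaring implicit in the paper's use of \cref{thm:dim} with $p=q=n$ and $\beta=\alpha$), and you spell out the Frobenius-coordinate extraction that the paper delegates to \cref{cotent}.
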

\begin{proof}
    Suppose $\dim F^{(n)}_{\lambda}=\dim F^{(m+n)}_{\lambda'}$ for all $n \geq \ell(\lambda)$. Since conjugate partitions have the same multiset of hook lengths, the hook–content formula \eqref{dim} yields that the number of content tabloids of shape $\lambda$ and $\lambda'$ are the same. So, using \eqref{count:content}, we have 
    \begin{equation}
    \label{11}
      \prod_{u \in \lambda} (n+c(u)) 
    =
    \prod_{u \in \lambda'} (n+m+c(u)),  
    \end{equation}
  for all $n \geq \ell(\lambda)$. This is the same as 
    \begin{equation}
    \label{112}
      \prod_{i \in \mathbb{Z}} (n+i)^{x_i(\lambda)} 
    =
     \prod_{j \in \mathbb{Z}} (n+m+j)^{x_j(\lambda')} 
     = 
     \prod_{j \in \mathbb{Z}} (n+m-j)^{x_{j}(\lambda)}, 
    \end{equation}
 for all $n \geq \ell(\lambda)$. The last equality follows from the fact that $x_j(\lambda')=x_{-j}(\lambda)$. Since \eqref{112} is true for all $n \geq \ell(\lambda)$,
     $x_i(\lambda)=x_{-i+m}(\lambda)$ for all $i$. Therefore by the unimodality of the content sequence and \cref{cotent}, $\lambda'$ is $m-$asymmetric.
     To prove the converse, assume $\lambda
   =(\alpha+m|\alpha)$ for some strict partition $\alpha$.
     Then the converse easily follows from \cref{thm:dim} by taking $\beta=\alpha$ and $p=q=n$, completing the proof.
\end{proof}
We have the following $q$-analogue. It is stated in a slightly different language than remark 2.3 in~\cite{erick}. 
\begin{thm}
\label{thm:3}
Let $\lambda$ be a $m-$asymmetric partition of length at most $m+n$. Then
\[
s_{\lambda}(q^{1-n-m},q^{3-n-m},\dots,q^{n+m-3},q^{n+m-1}) = s_{\lambda'}(q^{1-n},q^{3-n},\dots,q^{n-3},q^{n-1}).
\]
\end{thm}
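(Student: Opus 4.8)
The plan is to deduce this $q$-analogue directly from the symmetric-principal specialization of the Schur polynomial together with the content-sum identity already established in \cref{cor:content1}. The key observation is that the two evaluation points appearing in the statement are symmetric principal specializations: setting $X = (q^{1-N}, q^{3-N}, \dots, q^{N-1})$ in $N$ variables amounts to substituting $x_i = q^{N+1-2i}$, which is the principal specialization centered so that the exponents are symmetric about $0$. For the left side we have $N = n+m$ and for the right side $N = n$, matching the lengths required for $\lambda$ and $\lambda'$.

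First I would record the effect of the symmetric shift on Stanley's formula \eqref{spe-schur}. Writing $s_\lambda(1,q,\dots,q^{N-1}) = q^{k(\lambda)} \prod_{u \in \lambda} \frac{[N+c(u)]}{[h(u)]}$, I would substitute $q \mapsto q^2$ and multiply by the monomial $q^{-(N-1)|\lambda|}$ that implements the symmetric recentering $x_i = q^{N+1-2i}$, since $s_\lambda$ is homogeneous of degree $|\lambda|$. The quantum integer $[a]$ evaluated at $q^2$ becomes $\frac{q^{2a}-1}{q^2-1}$, and the recentering converts each such factor into a balanced quantum integer $\frac{q^{a}-q^{-a}}{q-q^{-1}}$, which I will abbreviate so that the powers of $q$ from the Vandermonde-type normalization are absorbed symmetrically. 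The upshot is a clean formula of the shape
\[
s_\lambda(q^{1-N}, q^{3-N}, \dots, q^{N-1}) = q^{E_\lambda} \prod_{u \in \lambda} \frac{\{N+c(u)\}}{\{h(u)\}},
\]
where $\{a\} = q^a - q^{-a}$ and $E_\lambda$ collects the leftover exponent, which I expect to reduce to $E_\lambda = 2k(\lambda) - (N-1)|\lambda| - \sum_{u}c(u) - \sum_u h(u)$ up to a bookkeeping constant independent of the partition.

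Next I would compare the two sides. Since conjugate partitions share the same multiset of hook lengths, the hook-length products $\prod_u \{h(u)\}$ agree for $\lambda$ and $\lambda'$, and the content factors satisfy $c_{\lambda'}(u) = -c_\lambda(u)$ under transposition, so that $\prod_{u \in \lambda}\{(n+m)+c(u)\} = \prod_{u \in \lambda'}\{n + c(u)\}$ exactly when $m$-asymmetry forces the content sequences of $\lambda$ and $\lambda'$ to interleave by a shift of $m$ — this is precisely the content-sequence symmetry $x_i(\lambda) = x_{-i+m}(\lambda)$ recorded in the proof of \cref{thm:dim}. The remaining task is to show the two prefactors $q^{E_\lambda}$ and $q^{E_{\lambda'}}$ coincide, and this is where \cref{lem:k} and \cref{cor:content1} do the work: the difference $E_\lambda - E_{\lambda'}$ collapses to a combination of $k(\lambda)-k(\lambda')$ and $\sum_{\lambda}c(u) - \sum_{\lambda'}c(u)$, both of which equal $\pm\tfrac{m}{2}|\lambda|$ and $-m|\lambda|$ respectively, cancelling against the $(N-1)|\lambda|$ terms that differ by $m|\lambda|$ between the two specializations.

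The main obstacle will be the exponent bookkeeping: verifying that all the stray powers of $q$ arising from recentering the principal specialization, from the $\{h(u)\}$ normalization, and from the shift $N = n+m$ versus $N = n$ cancel exactly, rather than leaving a residual monomial. I expect this to be a careful but routine computation once the balanced quantum-integer normalization is fixed, with \cref{lem:k} supplying the crucial identity $2(k(\lambda)-k(\lambda')) = m|\lambda|$ that makes the prefactors match. An alternative, cleaner route I would keep in reserve is to avoid the explicit exponent $E_\lambda$ altogether by noting that both specializations are Laurent polynomials in $q$ invariant under $q \mapsto q^{-1}$ (by the symmetry of the evaluation points), so it suffices to match them as symmetric functions of $q$; combined with the factorwise bijection between content factors, this reduces the claim to the single scalar identity already proved, sidestepping the heaviest part of the calculation.
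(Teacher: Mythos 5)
Your proposal is correct in outline but follows a genuinely different route from the paper's. The paper never works with balanced quantum integers: it uses homogeneity to reduce the statement to $s_{\lambda}(1,q,\dots,q^{m+n-1}) = q^{m|\lambda|/2}\, s_{\lambda'}(1,q,\dots,q^{n-1})$, applies \eqref{spe-schur}, and then establishes the key identity $\prod_{u \in \lambda}[m+n+c(u)] = \prod_{u \in \lambda'}[n+c(u)]$ \emph{bijectively}, via the content-tabloid generating-function identity of \cref{cor:gf} (which rests on the bijection of \cref{lem:bij}) together with \cref{cor:content1}, finishing with \cref{lem:k}. You instead match the content factors directly, factor by factor, using the content-sequence symmetry of an $m$-asymmetric partition. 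That symmetry is available from \cref{cotent}, but note two slips: for $\lambda$ itself $m$-asymmetric it reads $x_i(\lambda) = x_{-i-m}(\lambda)$, whereas the version $x_i(\lambda)=x_{-i+m}(\lambda)$ you quote is the one for $\lambda'$ $m$-asymmetric, and it occurs in the proof of \cite[Theorem 2.2]{erick} (the theorem following \cref{thm:dim}), not in \cref{thm:dim}. This factorwise matching is a legitimate replacement for the paper's bijective step; what it buys is brevity, what it loses is the explicit bijection that is the point of the paper.

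Regarding the exponent bookkeeping, which you rightly flagged as the main risk: your tentative formula for $E_\lambda$ is wrong, and not merely by a partition-independent constant. Converting $[a]_{q^2} = q^{a-1}(q^{a}-q^{-a})/(q-q^{-1})$ contributes an extra $q^{N|\lambda| + \sum_u c(u) - \sum_u h(u)}$, so the correct exponent is $E_\lambda = |\lambda| + 2k(\lambda) + \sum_{u\in\lambda}c(u) - \sum_{u\in\lambda}h(u)$; with the expression you wrote, the prefactors of the two sides would differ by a residual $q^{m|\lambda|}$ rather than cancel. With the correct exponent your plan does close: $E_\lambda - E_{\lambda'} = 2\bigl(k(\lambda)-k(\lambda')\bigr) + \sum_{u\in\lambda}c(u) - \sum_{u\in\lambda'}c(u) = m|\lambda| - m|\lambda| = 0$ by \cref{lem:k} and \cref{cor:content1}. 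In fact more is true: using $\sum_{u}c(u) = k(\lambda')-k(\lambda)$ and the standard identity $\sum_{u}h(u) = k(\lambda)+k(\lambda')+|\lambda|$, one gets $E_\mu = 0$ for \emph{every} partition $\mu$, so the balanced specialization equals $\prod_u \bigl(q^{N+c(u)}-q^{-N-c(u)}\bigr)/\bigl(q^{h(u)}-q^{-h(u)}\bigr)$ on the nose, and neither \cref{lem:k} nor \cref{cor:content1} is needed. Your reserve argument proves exactly this with no computation: the specialization is invariant under $q \mapsto q^{-1}$ (the evaluation multiset is), and so is the balanced product (each factor changes sign, with $|\lambda|$ factors in numerator and denominator), so the monomial prefactor must be $1$. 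That version of your argument is complete and is arguably shorter than the paper's proof, at the cost of bypassing its bijective content.
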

\begin{proof} 
By \eqref{gldef}, we have
\begin{equation*}
    \begin{split}
       s_{\lambda}(q^x,q^{x+1},\dots,q^{x+n-1}) 
     &  =
\frac{
\det \left( (q^{x+i})^{\lambda_j+n-j} \right)}
{\det \left( (q^{x+i})^{n-j} \right)
} \\
& = \frac{q^{\sum_j x(\lambda_j+n-j)}  
\det \left( q^{i(\lambda_j+n-j)} \right)}
{q^{\sum_j x(n-j)}  
\det \left( q^{i(n-j)} \right)} 
\\
& = q^{x |\lambda| } s_{\lambda}(1,q,\dots,q^{n-1}), \quad  \text{ for each } x \in \mathbb{R}.  
    \end{split}
\end{equation*}
So, we note that
\begin{align*}
   s_{\lambda}(q^{1-n-m},\dots,q^{n+m-1})  
&= s_{\lambda}((q^2)^x,(q^2)^{x+1},\dots,(q^2)^{x+n+m-1}) \quad \\
\text{and} \quad
s_{\lambda'}(q^{1-n},\dots,q^{n-1})
&= s_{\lambda'}((q^2)^y,(q^2)^{y+1},\dots,(q^2)^{y+n-1}), 
\end{align*}
where $x=\frac{1-m-n}{2}$, $y=\frac{1-n}{2}$. Therefore, it is sufficient to show 
\begin{equation}
  q^{2x|\lambda|} s_{\lambda}(1,q^2,\dots,q^{2(m+n-1)})
=  q^{2y|\lambda'|} s_{\lambda'}(1,q^2,\dots,q^{2n-2}),  
\end{equation}
which in turn is equivalent to 
\begin{equation}
    s_{\lambda}(1,q^2,\dots,q^{2(m+n-1)})
=
q^{m |\lambda|} s_{\lambda'}(1,q^2,\dots,q^{2n-2})  
\end{equation}
or 
\begin{equation}
\label{2}
    s_{\lambda}(1,q,\dots,q^{m+n-1})
=
q^{\frac{m |\lambda|}{2}} s_{\lambda'}(1,q,\dots,q^{n-1})  
\end{equation}
By \eqref{spe-schur}, we have 
\[
s_{\lambda}(1,q,\dots,q^{m+n-1}) = q^{k(\lambda)} \prod_{u \in \lambda} \frac{[m+n+c(u)]}{[h(u)]} 
\]
and 
\[
s_{\lambda'}(1,q,\dots,q^{n-1}) = q^{k(\lambda')} \prod_{u \in \lambda'} \frac{[n+c(u)]}{[h(u)]}.
\]
Now observe that
\begin{equation}
\label{1}
    \begin{split}
       \prod_{u \in \lambda} 
       [m+n+c(u)]
       = \prod_{u \in \lambda} (1+q+&\dots+q^{m+n+c(u)-1}) 
      \\
      = \left( \prod_{u \in \lambda} 
       q^{c(u)-1} \right) &
\sum_{T \in C_{\lambda}} q^{n(T)} 
 =  \left( \prod_{u \in \lambda} 
       q^{c(u)-1} \right) 
\sum_{T \in C_{\lambda'}} 
q^{n(T)+m|\lambda|} 
\\
&=
q^{m|\lambda|} 
\left(
\frac{\prod_{u \in \lambda} q^{c(u)-1}}
{\prod_{u \in \lambda'} q^{c(u)-1}}
\right)
\prod_{u \in \lambda'} [n+c(u)],
    \end{split}
\end{equation}
where $C_{\lambda}$ is the set of content tabloids of shape $\lambda$, $n(T)$ is the sum of entries of a content tabloid $T$ and the third equality follows from \cref{cor:gf}.
Since $\lambda$ is $m$-asymmetric, by \cref{cor:content1} we note that
\[
\frac{\prod_{u \in \lambda} q^{c(u)-1}}
{\prod_{u \in \lambda'} q^{c(u)-1}} = q^{-m|\lambda|}.
\]
Substituting in \eqref{1}, we see that
\[
\prod_{u \in \lambda} 
       [m+n+c(u)]= \prod_{u \in \lambda'} 
       [n+c(u)].
\]
Hence,
\[
s_{\lambda}(1,q,\dots,q^{m+n-1}) = q^{k(\lambda)-k(\lambda')} 
s_{\lambda'}(1,q,\dots,q^{n-1}).
\]
So by \cref{lem:k}, \eqref{2} holds, completing the proof.
\end{proof}
\section*{Acknowledgements}

I would like to thank my PhD advisor A. Ayyer for all the insightful discussions.


\bibliographystyle{alpha}
\bibliography{ref}

\end{document}